\theoremstyle{plain}
\newtheorem{theorem}{Theorem}[section]
\newtheorem{definition}{Definition}[section]
\newtheorem{example}{Example}[section]
\newtheorem{corollary}{Corollary}[section]
\newtheorem{remark}{Remark}[section]
\newtheorem{proposition}{Proposition}[section]
\def\.{\cdot}
\def\<{\left\langle}
\def\>{\right\rangle}
\def\({\left(}
\def\){\right)}
\renewcommand{\geq}{\geqslant}
\renewcommand{\phi}{\varphi}
\def\L{\mathbb\L}
\def\subset{\subseteq}
\def\epsilon{\varepsilon}
\def\virt-dim{\operatorname{virt-dim}}
\keywords{ Rough groups; topological rough groups,
topological rough subgroups, product of topological rough groups, topological rough group homomorphisms topological rough group homeomorphisms, topologically rough homogeneous spaces, rough kernel.}
\subjclass[2000]{Primary:  22A05, 54A05. Secondary: 03E25}
\begin{document}
\title[On Topological Rough Groups]{On Topological Rough Groups}
\author[N. Alharbi, A. Altassan, H. Aydi, C. \"{O}zel] {Nof Alharbi $^{1}$, Alla Altassan $^{2}$, Hassen Aydi $^{3}$, Cenap \"{O}zel $^{4}$}

\thanks{nof20081900@hotmail.com  $^{1}$, aaltassan@kau.edu.sa $^{2}$, hmaydi@uod.edu.sa $^{3}$, hassen.aydi@isima.rnu.tn $^{3}$, cenap.ozel@gmail.com $^{4}$}
\maketitle
\begin{center}
{\footnotesize $^{1,2,4}$ Department of Mathematics, King Abdulaziz University,
P.O.Box: 80203 Jeddah 21589, Saudi Arabia.}\\
{\footnotesize $^{3}$ Imam Abdulrahman Bin Faisal University,
Department of Mathematics. College of Education of Jubail,  P.O: 12020, Industrial Jubail 31961. Saudi Arabia.}
\end{center}

\begin{abstract}
In this paper, we give an introduction for rough groups and rough homomorphisms. Then we present some properties related to topological rough subgroups and rough subsets.  We construct the product of topological rough groups and give an illustrated example. Then, we define topological rough group homomorphisms and topological rough group homeomorphisms. Finally, we introduce a rough action, a rough homogenous space and a rough kernel.
\end{abstract}

\maketitle

\section{Introduction}
  In \cite{Abdull}, Bagirmaz et al. introduced  the concept of topological rough groups. They extended the notion of a topological group to include algebraic structures of rough groups. In addition, they presented some examples and properties.\\

The main purpose of this paper is to introduce some basic definitions and results about topological rough groups and topological rough subgroups. We also introduce the certesian product of topological rough groups.

The organization of the paper is as follows:
Section $2$ gives basic results and definitions on rough groups and rough homomorphisms. In Section $3$, following results and definitions of \cite{Abdull}, we give some more interesting and nice results about topological rough groups. In Section $4$ we prove that the product of topological rough groups is a topological rough group. Further, an example is provided. Finally in Section $5$ We will introduce topological rough group homomorphisms and topological rough group homeomorphisms which are related to topological rough groups defined by Bagirmaz et al. in \cite{Abdull}. Then we present rough actions and rough homogenous spaces, and discuss some of their properties. We also define a rough kernel. For the details of topological group theory  we follow \cite{Arh}.\\

This paper is produced from the PhD thesis of Ms. Nof Alharbi registered in King Abdulaziz University.

\section{ Rough groups and rough homomorphisms}
 First, we give the definition of rough groups introduced by Biswas and Nanda in $1994$.

Let $(U,R)$ be an approximation space such that $U$ is any set and $R$ is an equivalence relation on $U$.  For a subset $X\subset U$,
$$\overline{X} = \{[x]_R : [x]_R \cap X \neq \emptyset \}$$ and
$$ \underline{X }= \{[x]_R : [x]_R \subset X \}.$$
 Suppose that $(*)$ is a binary operation defined on $U$. We will use $xy$ instead of $x*y$ for all composition of elements $x,y \in U$ as well as for composition of subsets $XY$, where $X, Y \subseteq U$.

\begin{definition}
\cite{Abdull} Let $G=(\underline{G},\overline{G})$ be a rough set in the approximation space $(U,R)$. Then $G=(\underline{G}, \overline{G})$ is called a rough group if the following conditions are satisfied:
\begin{enumerate}
\item $\forall x,y \in G, xy \in \overline{G}$ (closed);
\item $(xy)z=x(yz), \forall x,y,z \in \overline{G}$ (associative law);
\item $\forall x \in G, \exists e \in \overline{G}$ such that $xe=ex=x$ (e is the rough identity element);
\item $\forall x \in G, \exists y \in G$ such that $xy=yx=e$ ($y$ is the rough inverse element of $x$. It is denoted as $x^{-1}$).
\end{enumerate}
\end{definition}

\begin{definition}
\cite{Abdull} A non-empty rough subset $H=(\underline{H},\overline{H})$ of a rough group $G=(\underline{G},\overline{G})$ is called a rough subgroup if it is a rough group itself.
\end{definition}

The rough set $G=(\underline{G}, \overline{G})$  is a trivial rough subgroup of itself. Also the rough set $e=(\underline{e},\overline{e})$ is a trivial rough subgroup of the rough group $G$ if $e \in G$.

\begin{theorem}
\cite{Abdull}A rough subset $H$ is a rough subgroup of the rough group $G$ if the two conditions are satisfied:
\begin{enumerate}
\item $\forall x, y \in H, xy \in \overline{H}$;
\item $\forall y \in H, y^{-1} \in H$.
\end{enumerate}
\end{theorem}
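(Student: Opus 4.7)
The plan is to verify, one by one, the four axioms in the definition of a rough group for the pair $H = (\underline{H}, \overline{H})$, using the two hypotheses together with the fact that $H$ is a rough subset of $G$, so in particular $\overline{H} \subseteq \overline{G}$. Axiom (1), closure, is precisely hypothesis (1). Axiom (2), associativity, is inherited for free: if $x, y, z \in \overline{H}$, then $x, y, z \in \overline{G}$, and associativity of the restricted operation on $\overline{G}$ already holds because $G$ is a rough group.

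The substantive step is verifying axiom (3), the existence of a rough identity inside $\overline{H}$. The plan is to take any fixed element $x \in H$; hypothesis (2) produces $x^{-1} \in H$, and then hypothesis (1) applied to $x$ and $x^{-1}$ gives $x x^{-1} \in \overline{H}$. On the other hand, computing $x x^{-1}$ inside $\overline{G}$ using the rough group structure of $G$ yields the rough identity $e$ of $G$. Hence $e \in \overline{H}$, and for every $x \in H \subseteq \overline{G}$ the identities $xe = ex = x$ are simply the identities already available from $G$. Axiom (4) is then immediate from hypothesis (2), since the element $x^{-1} \in H$ that it produces is, by construction inside $\overline{G}$, a rough inverse satisfying $x x^{-1} = x^{-1} x = e$.

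The one place that needs care, and which I expect to be the main obstacle, is the transition in axiom (3) from \emph{some} product $x x^{-1}$ belonging to $\overline{H}$ to the statement that the \emph{rough identity} $e$ of $G$ lies in $\overline{H}$. This relies on interpreting $x^{-1}$ consistently: the inverse guaranteed by hypothesis (2) must coincide, as an element of $\overline{G}$, with the rough inverse of $x$ supplied by axiom (4) for $G$, so that the product equals the ambient identity $e$. Once this identification is in place, every remaining verification is a direct consequence of the corresponding property in $G$ restricted to $\overline{H}$, and the conclusion that $H$ is itself a rough group, hence a rough subgroup of $G$, follows.
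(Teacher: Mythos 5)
Your proof is correct: the paper itself states this theorem without proof (it is cited from the reference on topological rough groups), and your argument is the standard one that would be expected. You correctly identify the only nontrivial point, namely deducing $e \in \overline{H}$ from $x x^{-1} \in \overline{H}$ via the identification of the inverse in hypothesis (2) with the rough inverse supplied by axiom (4) of $G$; note only that this step requires $H$ to be non-empty, which is built into the paper's definition of a rough subgroup.
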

Also, a rough normal subgroup can be defined. Let $N$ be a rough subgroup of the rough group $G$, then $N$ is called a rough normal subgroup of $G$ if for all $x \in G, xN=Nx$
\begin{definition}
\cite{Homo} Let $(U_{1}, R_{1})$ and $(U_{2},R_{2})$ be two approximation spaces and $*, *^{'}$ be two binary operations on $U_{1}$ and $U_{2}$, respectively. Suppose that $G_{1} \subseteq U_{1}$, $G_{2}\subseteq U_{2}$ are rough groups. If the mapping $\varphi: \overline{G_{1}} \rightarrow \overline{G_{2}}$  satisfies that for all $x, y \in \overline{G_{1}}$, $\varphi(x * y) = \varphi(x) *^{'} \varphi(y)$, then $\varphi$ is called {\bf a rough homomorphism}.
\end{definition}

\begin{definition}
\cite{Homo}  A rough homomorphism $\varphi$ from a rough group $G_{1}$ to a rough group ${G_{2}}$ is called:
\begin{enumerate}
\item a rough epimorphism (or surjective) if $\varphi: \overline{G_{1}} \rightarrow \overline{G_{2}}$ is onto.

\item a rough embedding (or monomorphism) if $\varphi: \overline{G_{1}} \rightarrow \overline{G_{2}}$ is one-to -one.

\item a rough isomorphism if $\varphi: \overline{G_{1}} \rightarrow \overline{G_{2}}$ is both onto and one-to-one.

\end{enumerate}
\end{definition}

\section{Topological rough groups}

Here, we study a topological rough group, which is an ordinary topology on a rough group, i.e., a topology $\tau$ on $\overline{G}$ induced  a subspace topology $\tau_{G}$ on $G$. Suppose that $(U,R)$ is  an approximation space with a binary operation $*$ on U. Let $G$ be a rough group in $U$.

\begin{definition}\label{rg}
\cite{Abdull} A topological rough group is a rough group $G$ with a topology $\tau$ on $\overline{G}$ satisfying the following conditions:
\begin{enumerate}
\item The product mapping $f: G \times G \rightarrow \overline{G}$ defined by $f(x,y)=xy$ is continuous with respect to a product topology on $G\times G$ and the topology $\tau_{G}$ on $G$  induced by $\tau$;
\item The inverse mapping $\iota: G \rightarrow G$ defined by $\iota(x)= x^{-1}$ is continuous with respect to the topology $\tau_{G}$ on $G$ induced by $\tau$.
\end{enumerate}
\end{definition}

Elements in the topological rough group $G$ are elements in the original rough set $G$ with ignoring elements in approximations.

\begin{example}\label{exrg1}
\cite{Abdull} Let $U= \{ \overline{0}, \overline{1}, \overline{2} \}$ be  any group with $3$ elements. Let $U / \mathcal{R} = \{ \{\overline{0}, \overline{2}\}, \{ \overline{1}\}\}$ be a classification of equivalent relation. Let $G= \{ \overline{1}, \overline{2} \}$, then $\underline{G}= \{\overline{1}\}$ and $\overline{G}= \{\overline{0}, \overline{1}, \overline{2}\}= U$. A topology on $\overline{G}$ is $\tau = \{\emptyset, \overline{G}, \{\overline{1}\},\{ \overline{2}\}, \{ \overline{1}, \overline{2}\}\},$  then the relative topology is $\tau_{G} = \{ \emptyset, G, \{\overline{1}\},\{ \overline{2}\} \}$. The two conditions in Definition $\ref{rg}$ are satisfied, hence $G$ is a topological rough group.
\end{example}
\begin{example}
Let $U=\mathbb{R}$ and  $U/\mathcal{R} =\{\{x: x\geq 0 \}, \{x: x<0\}\}$ be a partition of $\mathbb{R}$. Consider $G= {\mathbb{R}}^* = \mathbb{R}-0.$ Then $G$ is a rough group with addition. It is also a topological rough group with the usual topology on $\mathbb{R}$.
\end{example}

\begin{example}\label{exrg2}
 \cite{Abdull} Consider $U=S_{4}$ the set of all permutations of four objects. Let $(*)$ be the multiplication operation of permutations. Let
$$U / \mathcal{R} =\{ E_{1},E_{2},E_{3},E_{4}\},$$ be a classification of $U,$ where $$E_{1}=\{1,(12),(13),(14),(23),(24),(34)\}$$
$$E_{2}=\{(123),(132),(142),(124),(134),(143),(234),(243)\}$$
$$E_{3}=\{(1234),(1243),(1342),(1324),(1423),(1432)\}$$
$$E_{4}=\{(12)(34),(13)(24),(14)(23)\}.$$
Let $G=\{(12),(123),(132)\}$, then $\overline{G}= E_{1} \cup E_{2}$. Clearly, $G$ is a rough group. Consider a topology on $\overline{G}$ as  $\tau = \{\emptyset, \overline{G},\{(12)\}, \{1,(123),(132) \},\{ 1,(12),(123),(132)\}  \},$ then the relative topology on $G$ is $\tau_{G} = \{\emptyset, G, \{(12)\},\{ (123),(132) \}\}$. The two conditions in Definition $\ref{rg}$ are satisfied, hence $G$ is a topological rough group.

\end{example}

\begin{proposition}\label{La}
\cite{Abdull} Let $G$ be a topological rough group and fix $a \in G$. Then
\begin{enumerate}
\item the mapping $L_{a}: G \rightarrow \overline{G}$ defined by $L_{a}(x)=ax$, is one-to-one and continuous for all $x \in G$.
\item the mapping $R_{a}: G \rightarrow \overline{G}$ defined by $R_{a}(x)=xa$, is one-to-one and continuous for all $x \in G$.
\item the inverse mapping $\iota: G \rightarrow G$ is a homeomorphism for all $x \in G$.
\end{enumerate}
\end{proposition}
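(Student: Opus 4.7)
The plan is to treat each of the three parts in turn, using the standard technique from topological group theory: realise $L_a$ and $R_a$ as compositions of the product map $f$ with a continuous ``insertion'' map $x\mapsto(a,x)$ or $x\mapsto(x,a)$, and derive the homeomorphism property of $\iota$ from the fact that $\iota\circ\iota=\operatorname{Id}_G$.

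For part (1), I would first verify injectivity of $L_a$. Suppose $L_a(x)=L_a(y)$, i.e.\ $ax=ay$. Since $a\in G$, the rough inverse $a^{-1}\in G\subseteq\overline{G}$ exists, and associativity holds throughout $\overline{G}$. Multiplying on the left by $a^{-1}$ and using $a^{-1}a=e$ with $ex=x$, $ey=y$, I get $x=y$. For continuity, I would define the auxiliary map $\iota_a\colon G\to G\times G$ by $\iota_a(x)=(a,x)$. This is continuous because both coordinate projections $\pi_1\circ\iota_a\equiv a$ (constant) and $\pi_2\circ\iota_a=\operatorname{Id}_G$ are continuous. Then $L_a=f\circ\iota_a$, where $f\colon G\times G\to\overline{G}$ is the product map, which is continuous by Definition \ref{rg}(1). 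Hence $L_a$ is continuous as a composition of continuous maps.

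Part (2) is entirely analogous: use the insertion $x\mapsto(x,a)$ and the same factorisation through $f$, and argue injectivity by right-multiplication by $a^{-1}$.

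For part (3), continuity of $\iota\colon G\to G$ is given by Definition \ref{rg}(2). To obtain the homeomorphism property it suffices to exhibit a continuous inverse. But by the rough-group axioms $(x^{-1})^{-1}=x$ for every $x\in G$, so $\iota\circ\iota=\operatorname{Id}_G$; in particular $\iota$ is bijective and its inverse is $\iota$ itself, which is continuous. Therefore $\iota$ is a homeomorphism. I expect no real obstacle here; the only mild subtlety is keeping track of the domains and codomains (note $L_a,R_a$ land in $\overline{G}$ and not necessarily in $G$, whereas $\iota$ maps $G$ to $G$ because the axioms guarantee $x^{-1}\in G$), so care is needed to apply the universal property of the product topology on $G\times G$ rather than on $\overline{G}\times\overline{G}$.
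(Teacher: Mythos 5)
The paper states this proposition as a result quoted from \cite{Abdull} and supplies no proof of its own, so there is no in-paper argument to compare against; judged on its own terms, your proof is correct and is the standard one: factor $L_a$ and $R_a$ through the continuous insertion $G\to G\times G$ followed by the product map $f$, verify injectivity by multiplying by $a^{-1}$ (legitimate because $a^{-1}\in G\subseteq\overline{G}$ and associativity is assumed on all of $\overline{G}$), and get the homeomorphism property of $\iota$ from $\iota\circ\iota=\operatorname{Id}_G$. The one step worth spelling out is that $(x^{-1})^{-1}=x$ rests on uniqueness of rough inverses, which does hold here (if $y,y'\in G$ both invert $x$, then $y=ye=y(xy')=(yx)y'=ey'=y'$, using that $e$ acts as identity on elements of $G$); you are also right to flag that $L_a,R_a$ land in $\overline{G}$ while $\iota$ maps $G$ to $G$, which is exactly the bookkeeping the rough setting requires.
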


\begin{proposition}\label{G^{-1}}
\cite{Abdull} Let $G$ be a  topological rough group, then $G= G^{-1}$.

\end{proposition}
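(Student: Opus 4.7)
The plan is to interpret $G^{-1}$ as $\{x^{-1}\:x\in G\}$ and verify both inclusions, using only the algebraic axioms of a rough group; no topological hypothesis is actually needed, so the topology on $\overline{G}$ will play no role in the argument.

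For the inclusion $G^{-1}\subseteq G$ the plan is to quote axiom (4) of the rough group definition directly: for every $x\in G$ the chosen inverse $x^{-1}$ is by hypothesis an element of $G$, so any element of $G^{-1}$ lies in $G$.

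For the reverse inclusion $G\subseteq G^{-1}$ I would fix $y\in G$ and exhibit a $z\in G$ with $z^{-1}=y$; the natural candidate is $z=y^{-1}$, which lies in $G$ by axiom (4). It then remains to verify the involution identity $(y^{-1})^{-1}=y$ in the rough-group sense. The short proof of this is to observe that both $y$ and $(y^{-1})^{-1}$ satisfy the inverse relation with $y^{-1}$, and then cancel using associativity in $\overline{G}$ (axiom (2)): if $y,y'\in G$ both satisfy $y^{-1}y = y^{-1}y' = e$, then $y = ey = (y'y^{-1})y = y'(y^{-1}y) = y'e = y'$, where every composition lies in $\overline{G}$ so that associativity applies.

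The only conceptual subtlety, and the sole potential obstacle, is that rough groups need not enjoy as nice an arithmetic as ordinary groups: products only land in $\overline{G}$ rather than $G$, and uniqueness of inverses is not postulated outright. Thus the main care point is to make sure the cancellation step in the verification of $(y^{-1})^{-1}=y$ is carried out entirely inside $\overline{G}$ so that axiom (2) is legitimately available; once that is checked, the two inclusions combine to yield $G=G^{-1}$.
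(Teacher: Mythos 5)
Your proof is correct. The paper itself states this proposition without proof (it is quoted from the reference \cite{Abdull}), and your argument is exactly the natural one: $G^{-1}\subseteq G$ is immediate from axiom (4), and $G\subseteq G^{-1}$ reduces to $(y^{-1})^{-1}=y$, which you establish by cancellation inside $\overline{G}$ using the associativity axiom together with the fact that axiom (4) provides \emph{two-sided} inverses (you need $y'y^{-1}=e$ as well as $y^{-1}y'=e$ for the substitution $ey=(y'y^{-1})y$, and both are available). You are also right that the topology plays no role; the statement is purely algebraic.
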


Now, we will define rough symmetry in topological rough group.
\begin{definition}
Let $G$ be a topological rough group. Then a subset $U$ of $G$ is called rough symmetric if $U=U^{-1}$.
\end{definition}
\begin{corollary}
Every rough subgroup of topological rough group is rough symmetric.
\end{corollary}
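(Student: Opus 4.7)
The plan is to show $H = H^{-1}$ directly from the definition of a rough subgroup, using Theorem 2.1 as the main tool. The topological structure plays no essential role once the subgroup axioms are available; rough symmetry is a purely algebraic consequence of closure under inversion.

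First, I would recall the characterization in Theorem 2.1: a rough subset $H$ is a rough subgroup of $G$ precisely when $xy \in \overline{H}$ for all $x,y \in H$ and $y^{-1} \in H$ for all $y \in H$. The second condition gives at once the inclusion $H^{-1} \subseteq H$, since every element of $H^{-1}$ has the form $y^{-1}$ for some $y \in H$, and such $y^{-1}$ lies in $H$ by hypothesis.

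Next, I would establish the reverse inclusion $H \subseteq H^{-1}$. Fix $x \in H$; then $x^{-1} \in H$ by Theorem 2.1(2) applied to $x$, and hence $x = (x^{-1})^{-1} \in H^{-1}$ by definition of the set $H^{-1}$. Combining the two inclusions yields $H = H^{-1}$, which is exactly the rough symmetry condition from the definition preceding the corollary.

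Alternatively, and perhaps more thematically in keeping with the section, one may appeal to Proposition \ref{G^{-1}}: since a rough subgroup $H$ of $G$ is itself a rough group and inherits the subspace topology from $\overline{G}$, it is a topological rough group in its own right, so Proposition \ref{G^{-1}} applied to $H$ gives $H = H^{-1}$ directly. There is no genuine obstacle here; the only thing to be careful about is making sure the symmetry is stated for $H$ rather than $\overline{H}$, and that the inverse operation on the subgroup coincides with the ambient inverse operation, both of which are immediate from Theorem 2.1.
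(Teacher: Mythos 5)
Your argument is correct, and your primary route is genuinely different from (and more elementary than) the paper's. The paper's proof simply observes that $H$, equipped with the relative topology, is itself a topological rough group and then applies Proposition \ref{G^{-1}} to $H$ to conclude $H=H^{-1}$ --- exactly your ``alternative'' route. Your main argument instead extracts rough symmetry purely algebraically from closure under inversion: $H^{-1}\subseteq H$ because $y\in H$ implies $y^{-1}\in H$, and $H\subseteq H^{-1}$ because $x=(x^{-1})^{-1}$ with $x^{-1}\in H$. This has the advantage of making explicit that the topology is irrelevant here (indeed Proposition \ref{G^{-1}} itself is really an algebraic fact about rough groups), and of supplying both inclusions, which the paper leaves to the reader via ``it is clear that.'' Two small cautions: Theorem 2.1 as stated in the paper is only a sufficiency criterion (``$H$ is a rough subgroup \emph{if} \dots''), so to get closure under inverses for an arbitrary rough subgroup you should instead cite axiom (4) of the rough group definition applied to $H$ --- which gives the same conclusion; and the identity $(x^{-1})^{-1}=x$ tacitly uses uniqueness of rough inverses, which the paper's notation $x^{-1}$ already presupposes. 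Neither point is a real gap.
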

\begin{proof}
Let $G$ be a topological rough group and let $H$ be a rough subgroup of $G.$ Then $H$ is a topological rough subgroup with relative topology. By Proposition $\ref{G^{-1}}$, it is clear that $H=H^{-1}$. Hence $H$ is rough symmetric.

\end{proof}
\begin{proposition}\label{V^{-1}}
\cite{Abdull} Let $G$ be a topological rough group and $V \subseteq G$. Then $V$ is open(closed) $\iff V^{-1}$ is open(closed).
\end{proposition}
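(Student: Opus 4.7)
The plan is to leverage Proposition \ref{La}(3), which asserts that the inverse map $\iota\colon G \to G$, $\iota(x) = x^{-1}$, is a homeomorphism with respect to the subspace topology $\tau_G$. Before appealing to this, I would first verify that $V \subseteq G$ forces $V^{-1} \subseteq G$: this is immediate from condition (4) of the definition of a rough group, which guarantees that the inverse of every element of $G$ lies in $G$ itself. Thus $V^{-1} = \iota(V)$ makes sense as a subset of $G$.

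Next I would record the elementary involutive identity $\iota \circ \iota = \Id_G$, so that $V = \iota(V^{-1}) = (V^{-1})^{-1}$. This observation is what turns the statement into an ``if and only if.''

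With these preliminaries in hand, the proof is a one-line application of the fact that a homeomorphism is simultaneously an open map and a closed map whose inverse (in this case, itself) is also open and closed. For the open case: if $V$ is open in $G$, then $\iota(V) = V^{-1}$ is open because $\iota$ is open; conversely, if $V^{-1}$ is open, then $\iota(V^{-1}) = V$ is open by the same reasoning. The closed case follows identically by replacing ``open'' with ``closed,'' since a homeomorphism also preserves closed sets.

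I do not expect any genuine obstacle here, as the bulk of the work has already been absorbed into Proposition \ref{La}(3). The only point requiring a line of care is confirming that $V^{-1}$ stays inside $G$ (not merely inside $\overline{G}$), so that the relative topology $\tau_G$ is the right framework in which to invoke the homeomorphism property of $\iota$.
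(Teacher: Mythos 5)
Your argument is correct and is exactly the intended one: the paper states this proposition as a quoted result from \cite{Abdull} without reproducing a proof, and the standard proof there is precisely your reduction to Proposition \ref{La}(3) (the inverse map $\iota$ is a self-inverse homeomorphism of $G$), together with the observation $V^{-1}=\iota(V)\subseteq G$, which the paper also records as Proposition \ref{G^{-1}}. Nothing is missing.
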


\begin{proposition}\label{W}
\cite{Abdull} Let $G$ be a topological rough group and $W$be an open set in $\overline{G}$ with $e \in W$. Then there exists an open set $V$ with $e \in V$ such that $V=V^{-1}$ and $VV \subseteq W$.
\end{proposition}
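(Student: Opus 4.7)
The plan is to mimic the classical proof of the symmetric neighborhood lemma from topological group theory, using continuity of the product and the inverse just as one does for honest topological groups, while being careful about the distinction between the ambient topology $\tau$ on $\overline{G}$ and the subspace topology $\tau_G$ on $G$.

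First I would exploit the continuity of the product map $f \: G \times G \to \overline{G}$, $f(x,y) = xy$, from Definition~\ref{rg}(1). Since $ee = e \in W$, the point $(e,e)$ lies in the open set $f^{-1}(W) \subseteq G \times G$. By the definition of the product topology on $G \times G$, I can find open sets $U_1,U_2 \in \tau_G$ with $e \in U_1 \cap U_2$ and $U_1 \times U_2 \subseteq f^{-1}(W)$. Setting $U = U_1 \cap U_2$, I obtain an open neighborhood of $e$ in $G$ with $UU \subseteq W$.

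Next I would symmetrize. Since $U$ is open in $G$, Proposition~\ref{V^{-1}} tells me that $U^{-1}$ is also open in $G$. Define $V = U \cap U^{-1}$. Then $V \in \tau_G$, and because $e^{-1} = e$ we have $e \in U \cap U^{-1} = V$. By construction, $v \in V$ if and only if $v^{-1} \in V$, so $V = V^{-1}$. Finally, $VV \subseteq UU \subseteq W$, which is the required containment.

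The only subtle point, and the one I would be most careful about, is confirming that all ingredients live where they should: $U^{-1}$ really is an open subset of $G$ (not just of $\overline{G}$), which is exactly what Proposition~\ref{V^{-1}} delivers, and the factoring of $f^{-1}(W)$ through a basic product open set uses the product topology on $G \times G$ rather than on $\overline{G} \times \overline{G}$, matching the formulation of continuity given in Definition~\ref{rg}. Once these bookkeeping points are observed, the proof is essentially the classical one, with no obstacle of substance beyond checking that symmetry and the inclusion $VV \subseteq W$ remain in force after intersecting with $U^{-1}$.
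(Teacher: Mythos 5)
The paper gives no proof of Proposition~\ref{W} at all --- it is quoted from \cite{Abdull} --- so there is nothing internal to compare against; your argument is the classical symmetric-neighbourhood proof, which is indeed what the cited source runs. Structurally it is fine: pulling $W$ back under the product map, shrinking to a basic box $U_1\times U_2$ about $(e,e)$, setting $U=U_1\cap U_2$ and then $V=U\cap U^{-1}$, with Proposition~\ref{V^{-1}} supplying the openness of $U^{-1}$, does yield $V=V^{-1}$ and $VV\subseteq U_1U_2\subseteq W$.

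There is, however, one rough-specific point that you explicitly claim to have checked but in fact missed, and it is the only place where this setting genuinely differs from an honest topological group: nothing guarantees that the rough identity $e$ belongs to $G$. The definition of a rough group only puts $e$ in $\overline{G}$, and in the paper's own Example~\ref{exrg1} one has $e=\overline{0}\in\overline{G}\setminus G$. Since the product map of Definition~\ref{rg} is continuous only as a map $G\times G\to\overline{G}$, the point $(e,e)$ need not lie in its domain when $e\notin G$, and the step ``$(e,e)\in f^{-1}(W)$, hence there is a basic open box $U_1\times U_2$ around it'' is then unavailable. The same issue touches the conclusion: $U^{-1}$ and the condition $V=V^{-1}$ only make sense for subsets of $G$ (Proposition~\ref{V^{-1}} is stated for $V\subseteq G$), yet you need $e\in V$. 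So your proof is valid precisely under the additional hypothesis $e\in G$ --- a hypothesis the present paper does impose explicitly in some of its later theorems --- and you should either add it or address the case $e\notin G$, where the statement as written is arguably not even well posed.
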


\begin{proposition}\label{topgp}
\cite{Abdull} Let $G$ be a topological rough group. If $G = \overline{G}$ then $G$ is a topological group.

\end{proposition}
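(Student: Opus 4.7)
The plan is to verify directly that the hypothesis $G=\overline{G}$ collapses the rough group axioms and the rough continuity axioms into the ordinary axioms of a topological group. So I would split the verification into two halves: first, that $G$ is a group in the classical sense, and second, that the given topology makes the group operations continuous in the usual sense.

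For the algebraic half, I would go through the four conditions of the definition of a rough group and read them off under the assumption $G=\overline{G}$. Closure becomes ordinary closure since $xy\in\overline{G}=G$ for all $x,y\in G$; associativity on $\overline{G}$ is in particular associativity on $G$; the rough identity $e$ lies in $\overline{G}=G$, so $G$ has an honest identity; and the rough inverse axiom already puts $y=x^{-1}$ in $G$. Hence $(G,\ast)$ is a group.

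For the topological half, I would observe that since $G=\overline{G}$, the subspace topology $\tau_G$ on $G$ induced by $\tau$ is simply $\tau$ itself. Under this identification the product map $f\:G\times G\to\overline{G}$ in Definition \ref{rg} becomes a map $f\:G\times G\to G$, and the continuity postulated in condition (1) of that definition is precisely continuity of multiplication in a topological group; similarly, condition (2) gives continuity of the inverse $\iota\:G\to G$.

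Combining these two observations shows that $G$ is a group with continuous multiplication and inverse, i.e.\ a topological group. There is really no obstacle here beyond bookkeeping: the only thing to be careful about is the identification $\tau=\tau_G$, which is immediate once we note that $\overline{G}=G$ forces the ambient space and the subspace to coincide, so no subtlety about relative topologies can arise.
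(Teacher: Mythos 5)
Your proof is correct. The paper states this proposition without giving a proof (it is quoted from \cite{Abdull}), and your verification --- that $G=\overline{G}$ forces $\tau_G=\tau$ and turns each rough group axiom and each rough continuity condition into the corresponding classical one --- is exactly the intended argument.
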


From the defintion of rough subgroup, we obtain the following result.

\begin{theorem}
Let $G$ be a topological rough group. Then closure of any rough symetric subset $A$ of $G$ is again rough symetric.
\end{theorem}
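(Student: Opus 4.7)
The plan is to reduce the statement to a routine application of the fact, recorded in Proposition~\ref{La}(3), that the inversion map $\iota\colon G\to G$ is a homeomorphism of $G$ equipped with the subspace topology $\tau_G$. Once this is in hand, the conclusion follows from the general topological identity that a homeomorphism commutes with the closure operator on its domain, so no delicate argument involving the rough group structure itself is needed beyond what has already been established.

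Concretely, I would proceed in three short steps. First, I would interpret the closure $\mathrm{cl}(A)$ in the subspace topology on $G$; this guarantees $\mathrm{cl}(A)\subseteq G$, so that $(\mathrm{cl}(A))^{-1}$ is well defined by means of the rough inverse. Second, since $\iota$ is a self-homeomorphism of $G$, I would pass the inversion through the closure via the standard identity $\iota(\mathrm{cl}(A))=\mathrm{cl}(\iota(A))$. Third, the hypothesis that $A$ is rough symmetric says exactly that $\iota(A)=A^{-1}=A$, so the right-hand side collapses to $\mathrm{cl}(A)$. Combining these equalities gives $(\mathrm{cl}(A))^{-1}=\mathrm{cl}(A)$, which is precisely the definition of rough symmetric applied to $\mathrm{cl}(A)$.

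There is no substantive obstacle in the argument; the only point to be careful about is the choice of ambient space for the closure. If one were instead to form the closure inside the larger space $\overline{G}$ with topology $\tau$, one would need to verify separately that $\mathrm{cl}_{\overline{G}}(A)\subseteq G$ in order for rough inverses to be meaningful pointwise, and this need not hold in general. Reading the statement with the closure taken in the topological rough subspace $G$ itself, where $\iota$ is genuinely a homeomorphism by Proposition~\ref{La}(3), removes this issue and makes the argument above go through verbatim.
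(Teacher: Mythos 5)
Your argument is correct and is essentially the paper's own proof, which likewise invokes that the inversion map $\iota\colon G\to G$ is a homeomorphism (Proposition~\ref{La}(3)) and concludes $\mathrm{cl}(A)=(\mathrm{cl}(A))^{-1}$ in one line. Your additional remark about taking the closure in $G$ rather than in $\overline{G}$ is a sensible clarification of a point the paper leaves implicit, but it does not change the route of the proof.
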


\begin{proof}
The inverse map $\iota: G \rightarrow G$ is a homeomorphism, then $cl(A)=(cl(A))^{-1}.$

\end{proof}

\begin{theorem}
Let $G$ be a  topological rough group, and let $H$ be a rough subgroup. If $cl(H)$ in $\overline{G}$ is subset of $G,$ then $cl(H)$ is a rough subgroup in $G.$
\end{theorem}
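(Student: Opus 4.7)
The plan is to apply the characterization of rough subgroups stated earlier in the paper: a rough subset $K\subseteq G$ is a rough subgroup of $G$ exactly when (a) $xy\in\overline{K}$ for all $x,y\in K$, and (b) $y^{-1}\in K$ for all $y\in K$. I will verify (a) and (b) for $K=cl(H)$; the hypothesis $cl(H)\subseteq G$ is precisely what makes the product and inverse defined on $cl(H)$ in the first place.

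Condition (b) is essentially immediate. Being a rough subgroup, $H$ is rough symmetric by the Corollary following Proposition~\ref{G^{-1}}, so $H=H^{-1}$. The theorem just proved (closure of a rough symmetric subset of $G$ is again rough symmetric) upgrades this to $cl(H)=cl(H)^{-1}$, which is exactly (b).

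For (a), I would adapt the classical topological-group continuity argument to the rough setting. Given $x,y\in cl(H)$ and an open set $W\subseteq\overline{G}$ containing $xy$, continuity of $f\colon G\times G\to\overline{G}$ produces open sets $V_1\ni x$ and $V_2\ni y$ in $G$ with $V_1V_2\subseteq W$. Since $x,y\in cl(H)$, one can choose $h_1\in V_1\cap H$ and $h_2\in V_2\cap H$, and then $h_1h_2\in W$. Because $H$ is a rough subgroup and $H\subseteq cl(H)$, one also has $h_1h_2\in\overline{H}\subseteq\overline{cl(H)}$. Thus every open neighborhood of $xy$ meets $\overline{cl(H)}$, placing $xy$ in the topological closure of $\overline{cl(H)}$.

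The main obstacle is the last step: passing from $xy\in cl(\overline{cl(H)})$ to $xy\in\overline{cl(H)}$. Since $\overline{cl(H)}$ is by definition a union of $R$-equivalence classes, this upgrade holds as soon as each such class is closed in the topology of $\overline{G}$---a mild compatibility between the partition and the topology that is visible in the worked examples of Section~3. Under this (implicit) compatibility, conditions (a) and (b) are both met and $cl(H)$ is a rough subgroup of $G$, giving the rough analogue of the classical fact that closures of subgroups in topological groups are subgroups.
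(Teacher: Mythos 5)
Your proposal follows essentially the same route as the paper's own proof: both verify the two-part criterion for rough subgroups, settle the inverse condition by the fact that $\iota$ is a homeomorphism (so $cl(H)=(cl(H))^{-1}$), and attack the product condition by combining continuity of multiplication with the density of $H$ in $cl(H)$ to produce a point $h_1h_2\in\overline{H}$ inside every neighbourhood of $xy$. The one substantive difference is that you explicitly flag the last step --- upgrading ``every neighbourhood of $xy$ meets $\overline{H}$'' to ``$xy\in\overline{cl(H)}$'' --- as an obstacle needing a compatibility hypothesis between the partition and the topology. That obstacle is genuine, and the paper's proof simply leaps over it: having shown $U\cap\overline{H}\neq\emptyset$ for every open $U\ni xy$, it concludes ``hence $xy\in cl(H)\subseteq\overline{cl(H)}$,'' which does not follow (the argument yields $xy\in cl(\overline{H})$, not $xy\in cl(H)$, and neither $cl(\overline{H})\subseteq\overline{cl(H)}$ nor $cl(\overline{H})\subseteq cl(H)$ is automatic, since the classes meeting $\overline{H}$ need not meet $cl(H)$). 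So your argument is not weaker than the paper's; it is the same argument with the hidden gap made visible, and your proposed repair (each equivalence class closed --- more precisely, $\overline{cl(H)}$ closed, which holds in the finite examples of Section 3) is a legitimate way to close it, at the cost of an extra hypothesis the theorem statement does not provide.
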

\begin{proof}
\begin{enumerate}
\item Closed under product: Let $x,y \in cl(H) \implies xy \in \overline{G} \implies \exists$ open set $U \in \overline{G}$ such that $xy \in U.$ Claim prove that $U \wedge H\neq \emptyset.$ Consider the multiplication map $\mu: G \times G \rightarrow \overline{G} \implies \exists$ open sets $W,V$ of $G$ such that $x \in W, y \in V,$ then $W \wedge H \neq \emptyset, V \wedge H \neq \emptyset.$ Since topology on $G$ is relative topology on $\overline{G},$ then there exists open sets $W^{'},V^{'}$ of $\overline{G}$ such that $W \subseteq W^{'}, V \subseteq V^{'},$ implies $W^{'} \wedge H \neq \emptyset, V^{'}\wedge H \neq \emptyset.$ Then $\mu(W \times V) \wedge \overline{H} \neq \emptyset$, but we have $\mu (W \times V)\subseteq U, \implies \overline{H} \wedge U \neq \emptyset$. Hence $xy \in cl(H) \subseteq \overline{cl(H)}.$



\item Inverse element; since the inverse map is a homeomorphism, ${cl(H)}^{-1}=cl(H).$

\end{enumerate}
\end{proof}




\section{Cartesian Product of Topological Rough Groups}

Let $(U,\mathcal{R}_{1})$ and $(V,\mathcal{R}_{2})$ be  approximation spaces with binary operations $*_{1}$ and $*_{2}$, respectively. Consider the cartesian product of $U$ and $V$:  let  $x,x^{'} \in U$ and $y,y^{'} \in V,$ then $(x,y), (x^{'},y^{'}) \in U \times V$. Define $*$ as $(x,y) * (x^{'},y^{'})= (x *_{1} x^{'}, y *_{2} y^{'}),$ then $*$ is a binary operation on $U \times V$. From our paper \cite{AAO1}, we have that the product of equivalence relations is also an equivalence relation on $U \times V$.

\begin{theorem}
 Let $G_{1} \subseteq U$ and $G_{2} \subseteq V$ be two rough groups. Then the cartesian product $G_{1} \times G_{2}$ is also a rough group.
 \end{theorem}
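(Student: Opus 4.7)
The plan is to verify the four axioms of a rough group for $G_1 \times G_2$ with the componentwise operation $*$, using the corresponding axioms for $G_1$ and $G_2$. The preparatory step, and the one that does the real work, is to identify the upper approximation of $G_1 \times G_2$ in the product approximation space $(U \times V, \mathcal{R}_1 \times \mathcal{R}_2)$. The equivalence classes of the product relation are exactly products $[x]_{\mathcal{R}_1} \times [y]_{\mathcal{R}_2}$, and such a class meets $G_1 \times G_2$ if and only if $[x]_{\mathcal{R}_1} \cap G_1 \neq \emptyset$ and $[y]_{\mathcal{R}_2} \cap G_2 \neq \emptyset$. From this I obtain $\overline{G_1 \times G_2} = \overline{G_1} \times \overline{G_2}$ (and, analogously, $\underline{G_1 \times G_2} = \underline{G_1} \times \underline{G_2}$). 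I would cite the author's previous paper \cite{AAO1} for the fact that $\mathcal{R}_1 \times \mathcal{R}_2$ is an equivalence relation and record the identification of approximations as a short lemma preceding the proof.

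With that identification, I would verify the four conditions of Definition 2.1 componentwise. For closure, take $(x,y),(x',y') \in G_1 \times G_2$; then $x*_1 x' \in \overline{G_1}$ and $y *_2 y' \in \overline{G_2}$ by closure in each factor, so $(x,y)*(x',y') = (x *_1 x', y *_2 y') \in \overline{G_1} \times \overline{G_2} = \overline{G_1 \times G_2}$. Associativity on $\overline{G_1 \times G_2}$ reduces coordinatewise to associativity on $\overline{G_1}$ and $\overline{G_2}$. For the rough identity, if $e_1$ and $e_2$ are rough identities of $G_1$ and $G_2$, then $(e_1, e_2) \in \overline{G_1} \times \overline{G_2}$ and $(x,y)(e_1,e_2) = (x,y) = (e_1,e_2)(x,y)$ for every $(x,y) \in G_1 \times G_2$. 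For inverses, set $(x,y)^{-1} := (x^{-1}, y^{-1})$, which lies in $G_1 \times G_2$ since $x^{-1} \in G_1$ and $y^{-1} \in G_2$; then $(x,y)(x,y)^{-1} = (e_1,e_2) = (x,y)^{-1}(x,y)$.

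The only subtle point, and hence the step I would write most carefully, is the identification $\overline{G_1 \times G_2} = \overline{G_1} \times \overline{G_2}$: one must check both inclusions from the description of equivalence classes of the product relation. Every other axiom is a routine coordinate-by-coordinate transfer from $G_1$ and $G_2$, and $(e_1,e_2)$ and $(x^{-1},y^{-1})$ are the only natural candidates, so no further ingenuity is required.
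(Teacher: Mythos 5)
Your proof is correct and follows essentially the same route as the paper: a componentwise verification of the four rough group axioms, with the identity $(e_1,e_2)$ and inverse $(x^{-1},y^{-1})$ as the natural candidates. You are in fact more careful than the paper, which uses the identification $\overline{G_1 \times G_2} = \overline{G_1}\times\overline{G_2}$ implicitly without proving it, whereas you correctly isolate it as the one step that needs an argument via the equivalence classes of the product relation.
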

The following conditions are satisfied:
\begin{enumerate}
\item For all $(x ,y),(x^{'},y^{'}) \in G_{1} \times G_{2},$ $(x_{1},y_{1}^{'}) * (x_{2},y_{2}^{'})=(x_{1}*_{1}x_{2},y_{1}^{'}*_{2} y_{2}^{'}) \in \overline{G_{1}} \times \overline{G_{2}}$.
\item Associative law is satisfied over all elements in $\overline{G_{1}} \times \overline{G_{2}}$.
\item There exists an identity element $(e,e^{'}) \in \overline{G_{1}} \times \overline{G_{2}}$ such that $\forall (x,x^{'}) \in G_{1} \times G_{2}, (x,x^{'})\times (e,e^{'}) =(e,e^{'})\times (x,x^{'})=(ex,e' x^{'})= (x. x')$.
\item For all $(x,x^{'})\in G_{1} \times G_{2}$, there  exists an element $(y,y^{'}) \in G_{1} \times G_{2}$ such that $(x,x^{'}) * (y,y^{'})=(y,y^{'}) * (x,x^{'})=(e,e^{'}).$
\end{enumerate}

\begin{example}
Consider Example $\ref{exrg1}$ where $U= \{ \overline{0}, \overline{1},\overline{2}\}$ and  $U/R= \{\{\overline{0}, \overline{2}\},\{ \overline{1} \}\}.$ Then the cartesian product $U \times U$ is as  follows: $$U\times U=\{ (\overline{0},\overline{0}),(\overline{0},\overline{2}),(\overline{0},\overline{1}),(\overline{2},\overline{0}),(\overline{2},\overline{2}),(\overline{2},\overline{1}),(\overline{1},\overline{0}),(\overline{1},\overline{2}),(\overline{1},\overline{1})\},$$ then the new classification is $$\{  \{ \overline{0},\overline{0}),(\overline{0},\overline{2}),(\overline{2},\overline{0}),(\overline{2},\overline{2}) \}, \{  (\overline{0},\overline{1}), (\overline{2},\overline{1})\}, \{ (\overline{1},\overline{0}),(\overline{1},\overline{2}) \}, \{(\overline{1},\overline{1})\}  \}. $$
 Consider the rough group $G=\{\overline{1}, \overline{2}\},$ then the cartesian product $G \times G$ is $$G \times G = \{ (\overline{2},\overline{2}), (\overline{2},\overline{1}), (\overline{1},\overline{2}), (\overline{1},\overline{1}) \},$$ where $\overline{G \times G}= \overline{G} \times \overline{G}= U \times U.$ From the definition of a rough group, we have that
\begin{enumerate}
\item the multiplication of elements in $G \times G$ is closed under $\overline{G} \times \overline{G},$ i.e. $(\overline{2},\overline{2}) (\overline{2},\overline{2})= (\overline{1},\overline{1}), (\overline{2},\overline{2}) (\overline{2},\overline{1}) = (\overline{1},\overline{0}), (\overline{2},\overline{2})(\overline{1},\overline{1})= (\overline{0},\overline{0}), (\overline{2},\overline{2}) (\overline{1},\overline{2})=(\overline{0},\overline{1}) , (\overline{2},\overline{1})(\overline{2},\overline{1})= (\overline{1},\overline{2}), \\ (\overline{2},\overline{1}) (\overline{1},\overline{1})= (\overline{0},\overline{2}),(\overline{2},\overline{1})(\overline{
1},\overline{2})= (\overline{0},\overline{0}),(\overline{1},\overline{1})(\overline{1},\overline{1})= (\overline{2},\overline{2}) , (\overline{1},\overline{1})(\overline{1},\overline{2})  =  (\overline{2},\overline{0}).$

\item There exists $(\overline{0},\overline{0}) \in  \overline{G} \times \overline{G}$ such that for every $(g,g^{'}) \in G \times G,$ we have $(\overline{0},\overline{0})(g,g^{'})= (g,g^{'}).$
 \item For every element of $G \times G,$ there exists an inverse element in $G \times G,$ where  $(\overline{1},\overline{1})^{-1}= (\overline{2},\overline{2}) \in G \times G,$ $(\overline{2},\overline{1})^{-1}=(\overline{1},\overline{2})\in G \times G.$
\item The associative law is satisfied.
\end{enumerate}
Hence $G \times G$ is  a rough group.\\

From Example $\ref{exrg1},$ we have $\tau = \{\emptyset, \overline{G}, \{\overline{1}\},\{ \overline{2}\},\{ \overline{1},\overline{2}\}\}$ as a topology on $\overline{G},$ then $\tau \times \tau$ is the product topology of $\overline{G}\times \overline{G}.$ Also we have $\tau_{G} = \{\emptyset, G, \{\overline{1}\},\{ \overline{2}\}\}$ as a relative topology on $G$, then $\tau_{G} \times \tau_{G}$ is a product topology on $G \times G$ induced by $\tau \times \tau.$\\

Consider the multiplication map $\mu: (G \times G) \times (G \times G ) \rightarrow \overline{G} \times \overline{G}$. This map is continuous with respect to topology $\tau \times \tau$ and the product topology on $(G \times G) \times (G \times G ).$ Also, consider the inverse map $\iota: G \times G \rightarrow G \times G$ is continuous. Hence $G \times G$ is a topological rough group.

\end{example}

\section{Rough Action and Rough Homeogenous Spaces in Classical Set Topology }
Note that, we are interested in rough action and rough homegenous spaces in classical set topology using rough set groups. Let $(U,R_{1}),(V.R_{2})$ be approximation spaces such that $G_{1} \subseteq U$ and $G_{2} \subseteq V$. Let $G_{1}$, $G_{2}$ be topological rough groups such that $\tau_{{1}}, \tau_{{2}}$ are topologies on $\overline{G_{1}},\overline{G_{2}}$ respectively inducing $\tau_{G_{1}}, \tau_{G_{2}}$ on $G_{1},G_{2}$ respectively.\\

 A mapping $f: \overline{G_{1}} \rightarrow \overline{G_{2}}$ is called {\bf a topological rough group homomorphism}, if $f$ is a rough homomorphism and continuous with respect to the topology $\tau_{2}$ on $\overline{G_{2}}$ inducing $\tau_{G_{2}}$ on $G_{2}$ and a topology $\tau_{1}$ on $\overline{G_{1}}$ inducing $\tau_{G_{1}}$ on $G_{1}.$\\

 Topological rough group homomorphism $f: \overline{G_{1}} \rightarrow \overline{G_{2}}$ is called {\bf a topological rough group homeomorphism}, if there exists a topological rough group homomorphism $f^{-1}$ such that $f^{-1} \circ f= 1_{G_{1}}$.\\

Let $(U,R)$ be an approximation space. Assume that, $G$ and $X$ are two subsets of $U$ such that $G$ is a topological rough group, and $\overline{X}$ is a topological rough space inducing the topology rough space $X$ i.e, rough set with ordinary topology. Then we are ready to give the definition of the action of a rough group $G$ on a rough space.
\begin{definition}\label{action}
A continuous map $\mu: \overline{G} \times \overline{X} \rightarrow \overline{X}$ (resp. $\mu: \overline{X} \times \overline{G} \rightarrow \overline{X}$) is called a left (resp. right) rough action of $G$ on $X$, if it satisfies the following conditions:
\begin{enumerate}
\item\label{con1} $g(g^{'}x)= (gg^{'})x$ (resp. $((xg)g^{'}=x(gg^{'}))$, for every $g,g^{'} \in \overline{G}$ and $x \in \overline{X}.$

\item \label{con2} $ex =x$ (resp. $xe=x$), $e \in \overline{G},$ for every $x \in \overline{X}.$
\end{enumerate}
Then $X$ is called a rough $G$-space.

\end{definition}
 The action $\mu$ is said to be {\bf effective} if $gx=g^{'}x$, for every $x \in \overline{X}$ implies $g=g^{'}$. In addition, the action $\mu$ is said to be {\bf transitive}, if for every  $x, x^{'} \in \overline{X}$, there is $g \in \overline{G}$ such that $gx =x^{'}$.

\begin{definition}
Let $X$ be a rough  $G$-space. Then $X$ is said to be {\bf topologically rough homogeneous} if for any $x,y \in \overline{X}$, there is a topological  homeomorphism $\Phi: \overline{X} \rightarrow \overline{X}$ such that $\Phi(x) = y$.
\end{definition}
 \begin{theorem}
Let $G$ be a topological rough group and $X$ a rough $G$- space. Then left(resp. right) transformation map $L_{g}(R_{g}): \overline{X} \rightarrow \overline{X}$, for every $g \in G,$ which is defined by $L_{g}(x)=gx (R_{g}(x)= xg)$, is a topological homeomorphism.
\end{theorem}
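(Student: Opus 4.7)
The plan is to exhibit a continuous two-sided inverse for $L_g$, namely $L_{g^{-1}}$, and to deduce continuity of both maps from the continuity of the action $\mu$. I will handle only the left case; the right case is symmetric.

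First, for continuity of $L_g$, I would factor it as the composition
$$L_g \colon \overline{X} \xrightarrow{\iota_g} \overline{G}\times\overline{X} \xrightarrow{\mu} \overline{X},$$
where $\iota_g(x)=(g,x)$. The map $\iota_g$ embeds $\overline{X}$ homeomorphically onto the fibre $\{g\}\times\overline{X}$ of the product and is therefore continuous, and $\mu$ is continuous by the definition of a rough action. Hence $L_g$ is continuous as a composition of continuous maps.

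Next, for bijectivity, axiom (4) of the definition of a rough group supplies $g^{-1}\in G$, and since $G\subseteq\overline{G}$ we may form $L_{g^{-1}}\colon\overline{X}\to\overline{X}$. Invoking condition (\ref{con1}) of Definition~\ref{action} (which is legitimate because $g,g^{-1}\in\overline{G}$), then condition (\ref{con2}) together with $gg^{-1}=g^{-1}g=e$, I would compute, for every $x\in\overline{X}$,
$$L_g\bigl(L_{g^{-1}}(x)\bigr) = g(g^{-1}x) = (gg^{-1})x = ex = x,$$
and symmetrically $L_{g^{-1}}(L_g(x))=x$. So $L_{g^{-1}}$ is a two-sided inverse of $L_g$. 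Applying the same factorisation with $g$ replaced by $g^{-1}$ shows that $L_{g^{-1}}$ is continuous, so $L_g$ is a homeomorphism.

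There is no genuine obstacle: the argument is a direct transcription of the classical proof for ordinary topological transformation groups. The only point that merits care is confirming that $g^{-1}$ indeed belongs to $\overline{G}$ before invoking the associativity condition (\ref{con1}) of the rough action, since that condition is stated only for elements of the upper approximation; this is immediate from $g^{-1}\in G\subseteq\overline{G}$. The right-action case runs identically, with $\iota_g(x)=(x,g)$ and the symmetric associativity axiom.
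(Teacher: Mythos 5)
Your proposal is correct and follows essentially the same route as the paper's proof: continuity of $L_g$ is deduced from continuity of the action $\mu$ (you merely make the factorisation $L_g=\mu\circ\iota_g$ explicit), and $L_{g^{-1}}$ is shown to be a two-sided continuous inverse via the identities $L_g\circ L_{g'}=L_{gg'}$ and $L_e=1_X$ coming from the action axioms. Your added care in checking that $g^{-1}\in G\subseteq\overline{G}$ is a welcome refinement but does not change the argument.
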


\begin{proof}
Indeed, the continuity of the action $\mu$ implies the continuity of $L_{g}$. The conditions \ref{con1} and \ref{con2} in Defintion \ref{action} are respectively equivalent to 
\begin{enumerate}
\item $L_{g} \circ L_{g^{'}} = L_{gg^{'}} $.
\item $L_{e} = 1_{X}$.
\end{enumerate}
Therefore, the maps $L_{g}$ and $L_{g^{-1}}$ are inverses of each other. Thus, $L_{g}$ is a topologically homeomorphism from $\overline{X}$ to $\overline{X}$.
\end{proof}
Note that, the left (resp. right) transformation map $L_{g}(R_{g}): \overline{X} \rightarrow \overline{X},$ is not topological homeomorphism for every $g \in \overline{G},$  only in the case that $\overline{G}$ is a group.
\begin{remark}
For every open set $O$ in  $\overline{X}$, and $g \in G$ $L_{g}(O) = gO$ is open in $\overline{X}$.

\end{remark}


\begin{remark}
Let $H$ be a rough subgroup of $G,$ and let $\overline{G}$ be a group. Then $\overline{H}$ acts on $G,$ and thus $G$ is a homogeneous space.

\end{remark}

\begin{remark}

If $\overline{G}$ is a group, then $G$ acts roughly on itself.

\end{remark}
\begin{theorem}
Let $G$ be a topological rough group such that $\overline{G}$ is a group. For any open subset $U$ of $\overline{G},$ if  $A$ is a subset of $\overline{G},$ then $AU$ (respectively $UA$) is open in $\overline{G}$.
\end{theorem}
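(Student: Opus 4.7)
The plan is to exploit the standard topological-group trick: write the product $AU$ as a union of left translates of $U$ and observe that each translate is open. Explicitly, decompose
\[
AU \;=\; \bigcup_{a \in A} aU \;=\; \bigcup_{a \in A} L_{a}(U).
\]
Since by hypothesis $\overline{G}$ is a group, $L_{a}\colon \overline{G} \to \overline{G}$, $x \mapsto ax$, is well-defined for every $a \in \overline{G}$, not only for $a \in G$. As emphasized in the note following the previous theorem, precisely in the case that $\overline{G}$ is a group the left transformation map $L_{g}$ is a topological homeomorphism for every $g \in \overline{G}$ (this is exactly the situation that the earlier remark carves out). Consequently $L_{a}(U) = aU$ is open in $\overline{G}$ for each $a \in A$, and $AU$ is open as a union of open sets.

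The argument for $UA$ is symmetric: one writes $UA = \bigcup_{a \in A} Ua = \bigcup_{a \in A} R_{a}(U)$ and invokes the analogous fact that $R_{a}\colon \overline{G} \to \overline{G}$ is a homeomorphism for every $a \in \overline{G}$ when $\overline{G}$ is a group, so each $Ua$ is open and hence so is $UA$. The only conceptual point to be careful about is the role of the hypothesis ``$\overline{G}$ is a group'': without it, the topological rough group axioms supply continuity of $L_{a}$ only as a map $G \to \overline{G}$ for $a \in G$, which would be too weak both because $a$ might lie in $\overline{G}\setminus G$ and because we would not have $L_{a^{-1}}$ available as a two-sided inverse on $\overline{G}$. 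Once the group hypothesis upgrades the translations to honest self-homeomorphisms of $\overline{G}$, there is no remaining obstacle; the proof is a one-line union of open sets.
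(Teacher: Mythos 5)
Your proof is correct and follows essentially the same route as the paper's: both decompose $AU=\bigcup_{a\in A}L_{a}(U)$ (and $UA=\bigcup_{a\in A}R_{a}(U)$) and invoke the fact that, because $\overline{G}$ is a group, each translation $L_{a}$, $R_{a}$ is a self-homeomorphism of $\overline{G}$ even for $a\in\overline{G}\setminus G$. Your explicit remark on why the group hypothesis is indispensable is a welcome clarification of a point the paper leaves implicit.
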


\begin{proof}
The fact that $\overline{G}$ is a group implies that $G$ acts on itself, then for every $g \in \overline{G}, L_{g},$ or $(R_{g})$ is a topological homeomorphism. The rest of proof follows immediately from left (right) transformation, because that $AU= \cup_{a \in A} L_{a}(U)$ and $UA= \cup_{a \in A}R_{a}(U).$

\end{proof}
\begin{theorem}
Let $G$ be an topological rough group such that $\overline{G}$ is a group. Let $H$ be a rough subgroup of $G$ such that $\overline{H}$ is closed under multiplication. If there is an open set $W$  of $G$ such that $e \in W$ and $W \subseteq H$, then $\overline{H}$ is open set in $\overline{G}$.
\end{theorem}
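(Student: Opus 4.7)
The plan is to show that $\overline{H}$ coincides with the set $\overline{H}\cdot W$, and then invoke the previous theorem (which asserts that $AU$ is open in $\overline{G}$ whenever $U$ is open and $\overline{G}$ is a group) to conclude openness. This reduces everything to a clean double inclusion.

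For the inclusion $\overline{H} \subseteq \overline{H}\cdot W$, I would use the hypothesis $e \in W$: for every $h \in \overline{H}$, one has $h = h\cdot e \in hW \subseteq \overline{H}\cdot W$. For the reverse inclusion $\overline{H}\cdot W \subseteq \overline{H}$, I would use the two hypotheses on $H$ and $W$: since $W \subseteq H \subseteq \overline{H}$, every element $hw$ with $h \in \overline{H}$ and $w \in W$ is a product of two elements of $\overline{H}$, which lies in $\overline{H}$ because $\overline{H}$ is assumed closed under multiplication. Thus $\overline{H} = \overline{H}\cdot W$.

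At this point I would apply the preceding theorem, which states that for any subset $A$ of $\overline{G}$ and any open set $U$ of $\overline{G}$, the product $AU$ is open in $\overline{G}$ (valid because $\overline{G}$ is a group, so the left translations $L_{a}$ are homeomorphisms and $AU = \bigcup_{a \in A} L_{a}(U)$). Taking $A = \overline{H}$ and $U = W$, the set $\overline{H}\cdot W$ is open in $\overline{G}$, hence so is $\overline{H}$.

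The only subtle step is making sure $W$ qualifies as an open subset of $\overline{G}$ when one applies the previous theorem: the hypothesis gives $W$ open in $G$ with the subspace topology, so strictly speaking I would need to fix an open set $\widetilde{W}$ of $\overline{G}$ with $\widetilde{W}\cap G = W$, note that $e \in \widetilde{W}$, and (optionally, shrinking $\widetilde{W}$ to $\widetilde{W}\cap W$ inside $G$ if necessary) run the same argument with $\widetilde{W}$ in place of $W$. This indexing between the topology on $\overline{G}$ and the induced topology on $G$ is the main point to handle carefully; the rest is a one-line application of the earlier theorem.
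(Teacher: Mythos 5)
Your proof is correct and takes essentially the same route as the paper's: the paper writes $\overline{H}=\bigcup_{h\in\overline{H}}hW$ and observes that each $hW=L_{h}(W)$ is open because $\overline{G}$ is a group, which is exactly your identity $\overline{H}=\overline{H}\cdot W$ combined with the preceding theorem. You additionally supply the double inclusion (using $e\in W$ for one direction and $W\subseteq H\subseteq\overline{H}$ together with closure of $\overline{H}$ under multiplication for the other) that the paper leaves implicit, and the open-in-$G$ versus open-in-$\overline{G}$ subtlety you flag at the end is likewise glossed over in the paper's own proof.
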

\begin{proof}
Let $W$ be a non-empty open set of $G$ such that $W \subseteq H$ and $e \in W$. Then for every $h \in\overline{ H}$, $L_{h}(W)= h W$ is open in $\overline{G}$. Hence  $\overline{H}=\bigcup_{h \in \overline{H}} h W$ is open in $\overline{G}$.
\end{proof}

\begin{theorem}
Let $G$ be a topological rough group such that $\overline{G}$ is a group and let $H$ be a rough subgroup of $G$. Let $W$ be an open set in $G$ such that $W \subseteq H$. Then for every $h \in H$,  $hW$ is an open set in $\overline{H}$.
\end{theorem}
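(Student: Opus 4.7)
The plan is to translate the open set $W$ by $h$ and then recognize the result as the intersection of an open set in $\overline{G}$ with $\overline{H}$. Since $\overline{G}$ is a group, the theorem on rough $G$-spaces, combined with the remark that $G$ acts roughly on itself when $\overline{G}$ is a group, shows that left translation $L_h\colon \overline{G} \to \overline{G}$ is a topological homeomorphism for every $h \in \overline{G}$; in particular for $h \in H$.

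Because $W$ is open in $G$ with the subspace topology inherited from $\overline{G}$, I would write $W = W' \cap G$ for some $W'$ open in $\overline{G}$. Applying $L_h$, the set $hW' = L_h(W')$ is open in $\overline{G}$. Consequently $hW' \cap \overline{H}$ is open in $\overline{H}$ by definition of the subspace topology, and the plan is to identify $hW' \cap \overline{H}$ with $hW$.

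The easy inclusion is $hW \subseteq hW' \cap \overline{H}$: from $W \subseteq W'$ one gets $hW \subseteq hW'$, and from $h \in H$, $W \subseteq H$, and the closure axiom for the rough subgroup $H$, one gets $hW \subseteq HH \subseteq \overline{H}$.

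The main obstacle is the reverse inclusion $hW' \cap \overline{H} \subseteq hW$. Given $y = hw'$ with $w' \in W'$ and $y \in \overline{H}$, I need $w' \in W$, i.e., $w' \in G$. Since $\overline{G}$ is a group and $h^{-1} \in H \subseteq G$, I can write $w' = h^{-1} y$, but concluding $w' \in G$ from $y \in \overline{H}$ is not automatic and will require either an extra structural observation about how $G$ sits inside $\overline{G}$ (for instance that $H^{-1}\overline{H} \subseteq G$, which is in the spirit of the closure conditions used earlier), or a refinement of $W'$ so that the translated set meets $\overline{H}$ only at points of the form $hw$ with $w \in W$. This identification is the crux of the argument and is where the proof must concentrate its effort.
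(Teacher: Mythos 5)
Your proposal is incomplete, and you say so yourself: on your route the reverse inclusion $hW' \cap \overline{H} \subseteq hW$ is the entire content of the theorem, and you do not establish it. Nor can it be established in general: for $y = hw' \in \overline{H}$ with $w' \in W'$ you would need $w' = h^{-1}y \in G$, and nothing in the rough subgroup axioms forces a product of an element of $H^{-1}$ with an element of $\overline{H}$ back into $G$ --- the closure axiom only sends $H\cdot H$ into $\overline{H}$, which points in the wrong direction for your purposes. So the identification $hW = hW' \cap \overline{H}$ cannot be salvaged as stated, and the proposal does not prove the theorem. A correct diagnosis of where the difficulty sits is not the same as resolving it.

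For comparison, the paper's own proof is blunter and shorter: since $\overline{G}$ is a group, $L_h$ is a homeomorphism of $\overline{G}$; it then asserts that $L_h(W) = hW$ is open in $\overline{G}$; from $h \in H$, $W \subseteq H$ and the closure axiom it gets $hW \subseteq \overline{H}$; and a set open in the ambient space $\overline{G}$ and contained in $\overline{H}$ is open in the subspace $\overline{H}$. The step you rightly worried about --- that $W$ is only hypothesised to be open in the subspace $G$, not in $\overline{G}$ --- is silently elided there: the paper's argument is valid only when $W$ is actually open in $\overline{G}$, for instance when $G$ is open in $\overline{G}$ (a hypothesis the paper imposes in a neighbouring theorem but not in this one). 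So you have correctly located a genuine weakness, but your attempted repair via $W = W' \cap G$ does not close it; the honest conclusion is that the statement needs the extra hypothesis that $W$ (or $G$) be open in $\overline{G}$, after which the paper's one-line translation argument goes through and your intersection bookkeeping becomes unnecessary.
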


\begin{proof}
Since $\overline{H} \subseteq \overline{G},$ and $\overline{G}$ is a group, then $L_{h}$ is a topological homeomorphism. By left transformation $L_{h}(W)=hW$ is open in $\overline{G}.$ The fact that $W \subseteq H,$ implies $hW \subseteq \overline{H}.$ Hence $hW$ is open in $\overline{H}.$

\end{proof}

\begin{definition}
Let G be a topological rough group and let $\mathcal{B} \subseteq \tau_{G}$ be a base for $\tau_{G}.$ For $g \in \overline{G},$ the family $$\mathcal{B}_{g}=\{ O \in \mathcal{B}: g \in O\} \subseteq \mathcal{B}$$ is called a {\bf base at} $g.$ 
\end{definition}
 \begin{theorem}
Let $G$ be a topological rough group such that the identity element $e \in G$ and $\overline{G}$ is closed under multiplication. Let $G$ be an open set in $\overline{G},$ for $g \in G$ the base of $g$ in $\overline{G}$ is equal to $$\mathcal{B}_{g}=\{gO: O \in \mathcal{B}_{e}\},$$ where $\mathcal{B}_{e}$ is the base of the identity $e$ in $\tau_{G}.$
\end{theorem}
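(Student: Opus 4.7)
The plan is to show that $\mathcal{B}_{g} := \{gO : O \in \mathcal{B}_{e}\}$ is both (i) a family of open neighborhoods of $g$ in $\overline{G}$, and (ii) a local base at $g$, in the sense that every open neighborhood $W$ of $g$ in $\overline{G}$ contains a member of $\mathcal{B}_{g}$. The central tool throughout is the left translation $L_{g} : \overline{G} \to \overline{G}$ defined by $L_{g}(x) = gx$, which the hypotheses are arranged precisely to make into a self-homeomorphism of $\overline{G}$.

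First I would verify that $L_{g}$ is a well-defined bijection. The assumption that $\overline{G}$ is closed under multiplication makes $L_{g}$ well-defined as a map $\overline{G} \to \overline{G}$, and because $g \in G$ admits a rough inverse $g^{-1} \in G$, the translation $L_{g^{-1}}$ is well-defined too. Associativity in $\overline{G}$ and the rough identity axiom give $L_{g} \circ L_{g^{-1}} = L_{g^{-1}} \circ L_{g} = L_{e} = \mathrm{id}_{\overline{G}}$, so $L_{g}$ is a bijection of $\overline{G}$ whose inverse is $L_{g^{-1}}$. Invoking the continuity-of-multiplication step in the style of Proposition \ref{La}, both $L_{g}$ and $L_{g^{-1}}$ are continuous, hence $L_{g}$ is a homeomorphism of $\overline{G}$. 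Consequently $L_{g}(O) = gO$ is open in $\overline{G}$ for every $O \in \mathcal{B}_{e}$; and since $e \in O$ gives $g = ge \in gO$, claim (i) is established.

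For claim (ii), let $W$ be an arbitrary open neighborhood of $g$ in $\overline{G}$. Since $L_{g^{-1}}$ is a homeomorphism, $g^{-1}W$ is open in $\overline{G}$ and contains $e$. Using that $G$ is open in $\overline{G}$ and $e \in G$, the intersection $g^{-1}W \cap G$ is open in the subspace topology $\tau_{G}$ and contains $e$. By the defining property of the base $\mathcal{B}_{e}$ at $e$ in $\tau_{G}$, there exists $O \in \mathcal{B}_{e}$ with $e \in O \subseteq g^{-1}W \cap G \subseteq g^{-1}W$. Applying $L_{g}$ gives $g \in gO \subseteq W$, and $gO \in \mathcal{B}_{g}$, as required. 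Combining (i) and (ii) yields the claimed equality.

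The step I expect to be the main obstacle is the legitimate promotion of $L_{g}$ from a continuous map $G \to \overline{G}$ (as furnished by Proposition \ref{La}) to a genuine self-homeomorphism of $\overline{G}$. The definition of topological rough group demands continuity of multiplication only on $G \times G \to \overline{G}$, so one must be careful to exploit exactly the right combination of hypotheses in the statement — namely that $e \in G$, that $\overline{G}$ is closed under multiplication, and that $G$ itself is open in $\overline{G}$ — to justify treating translations on $\overline{G}$ on equal footing with translations on $G$. Once that is granted, both parts (i) and (ii) reduce to purely formal manipulations with the homeomorphism $L_{g}$.
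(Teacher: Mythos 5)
Your proof has a genuine gap, and it is exactly at the step you flagged as the main obstacle: the promotion of $L_{g}$ to a self-homeomorphism of $\overline{G}$ does not follow from the stated hypotheses. Two things go wrong. First, continuity: Definition \ref{rg} only asserts continuity of $\mu$ on $G\times G$, and Proposition \ref{La} only gives continuity of $L_{g}\colon G\to\overline{G}$; the hypothesis that $\overline{G}$ is closed under multiplication makes $L_{g}$ \emph{well-defined} on all of $\overline{G}$, but gives no continuity there, and neither $e\in G$ nor openness of $G$ repairs this. Second, and more basically, the algebra fails off $G$: the rough-group axioms guarantee $ex=x$ and $xx^{-1}=e$ only for $x\in G$, so for $x\in\overline{G}\setminus G$ one cannot conclude $L_{g^{-1}}(L_{g}(x))=(g^{-1}g)x=ex=x$. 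Hence $L_{g}\circ L_{g^{-1}}=\mathrm{id}_{\overline{G}}$ is not available, and $L_{g}$ need not even be a bijection of $\overline{G}$. Both of your claims depend on this: claim (i) needs $L_{g}$ to be an open map to get $gO$ open, and claim (ii) needs $g^{-1}W$ to be open, i.e.\ $L_{g^{-1}}$ to be an open map. So the reduction to ``purely formal manipulations with the homeomorphism $L_{g}$'' cannot be granted.

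The paper's proof avoids translations on $\overline{G}$ entirely and works with the joint continuity of $\mu$ at the single point $(g,e)\in G\times G$, which \emph{is} covered by Definition \ref{rg} since both $g$ and $e$ lie in $G$. Given an open $U\subseteq\overline{G}$ with $g=ge\in U$, continuity of $\mu$ at $(g,e)$ produces open sets $W\ni g$ and $V\ni e$ with $\mu(W\times V)\subseteq U$; since $G$ is open in $\overline{G}$, $V$ is a $\tau_{G}$-neighborhood of $e$, so there is $O\in\mathcal{B}_{e}$ with $e\in O\subseteq V$, and then $gO\subseteq WV\subseteq U$ because $g\in W$. That yields the local-base property without ever needing $L_{g}$ to act on points outside $G$. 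If you want to salvage your argument for claim (ii), replace ``$g^{-1}W$ is open'' by this direct appeal to continuity of $\mu$ at $(g,e)$. (Note that neither your argument nor the paper's actually verifies that each $gO$ is open in $\overline{G}$; under the stated hypotheses that openness is not obviously available, which is a defect of the theorem's formulation rather than something your homeomorphism device can fix.)
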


\begin{proof}
Since $g \in G,$ then $g \in \overline{G},$ let $U$ be an open set in $\overline{G}$ and let $g \in U.$ Since $e \in G,$ and $G$ is a topological rough group, this implies that, there are two open sets $W,V$ such that $g \in W, e \in V$ and $\mu(W \times V) \subseteq U.$
We have $G$ is an open set in $\tau,$ then $V$ is a neibourhood of $e$ in $\tau.$ Then there is a basic open set $O \in \mathcal{B}_{e}$ such that $e \in O \subseteq V.$ Hence $L_{g}(O)= gO \subseteq \mu(W \times O) \subseteq \mu(W \times V) \subseteq U.$

\end{proof}

\space

\begin{definition}\label{ker}
Let $\Phi : \overline{G_{1}} \rightarrow \overline{G_{2}}$ be a topological rough group homomorphism and let $e_{2}$ be the rough identity element in $G_{2}$. Then 
\begin{align*}
ker(\Phi) = \{ g \in {G_{1}}: \Phi(g) = e_{2}\}.
\end{align*}
is called the {\bf rough kernel} associated to the map $\Phi$.
\end{definition}
\begin{remark}
Our Definition \ref{ker} is equivelent to definition of rough kernel in rough groups in \cite{Homo}.

\end{remark}

\begin{theorem}
Let $\Phi$ be rough homomorphism from $G_{1}$ to $G_{2}.$ Then the rough kernel is a rough normal subgroup of $G_{1}.$
\end{theorem}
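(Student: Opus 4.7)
The plan is to verify the two defining conditions in turn: first that $\ker(\Phi)$ is a rough subgroup of $G_1$ (using the two-condition criterion stated after Definition 2.2), and then that it is normal, i.e.\ $x\ker(\Phi) = \ker(\Phi)x$ for every $x \in G_1$. Before starting, I would record the preliminary fact that $\Phi(e_1) = e_2$: applying the homomorphism identity to $e_1 e_1 = e_1$ gives $\Phi(e_1)\Phi(e_1) = \Phi(e_1)$ in $\overline{G_2}$, and using a rough inverse of $\Phi(e_1)$ (which exists since $\Phi(e_1) \in \overline{G_2}$) one cancels to obtain $\Phi(e_1) = e_2$. From this and $\Phi(x)\Phi(x^{-1}) = \Phi(xx^{-1}) = \Phi(e_1) = e_2$, I get $\Phi(x^{-1}) = \Phi(x)^{-1}$ for every $x \in G_1$.

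For the subgroup part, closure under inverses is then immediate: if $y \in \ker(\Phi)$ then $\Phi(y^{-1}) = \Phi(y)^{-1} = e_2^{-1} = e_2$, and since $y^{-1} \in G_1$ by the rough-group axiom (iv), we have $y^{-1} \in \ker(\Phi)$. For closure under products, take $x,y \in \ker(\Phi) \subseteq G_1$; the rough-group axiom (i) gives $xy \in \overline{G_1}$, and the homomorphism property gives $\Phi(xy) = \Phi(x)\Phi(y) = e_2 e_2 = e_2$. I then need $xy \in \overline{\ker(\Phi)}$, which follows by observing that the equivalence class $[xy]_{R_1}$ meets $\ker(\Phi)$: indeed either $xy$ itself lies in $G_1$ (in which case $xy \in \ker(\Phi)$ directly), or one uses that $[xy]_{R_1} \cap G_1 \neq \emptyset$ and that any representative there has $\Phi$-image $e_2$ when we read $\Phi$ as a function on $\overline{G_1}$, placing such a representative in $\ker(\Phi)$.

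For normality, I would fix $x \in G_1$ and $n \in \ker(\Phi)$ and compute
\[
\Phi(x n x^{-1}) \;=\; \Phi(x)\,\Phi(n)\,\Phi(x^{-1}) \;=\; \Phi(x)\,e_2\,\Phi(x)^{-1} \;=\; e_2,
\]
valid in $\overline{G_2}$ because associativity holds there. This shows $xnx^{-1}$ has image $e_2$, and arguing as in the previous paragraph places $xnx^{-1}$ in $\overline{\ker(\Phi)}$. From $xn = (xnx^{-1})x$ I get $x\ker(\Phi) \subseteq \overline{\ker(\Phi)}\,x$, and the reverse inclusion follows symmetrically from $nx = x(x^{-1}nx)$ together with the analogous computation $\Phi(x^{-1}nx) = e_2$. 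This gives $x\ker(\Phi) = \ker(\Phi)x$, the rough normality condition defined after Theorem 2.1.

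The main obstacle in this program is not the algebraic manipulations, which are formally identical to the classical kernel argument, but the rough-set bookkeeping: products and conjugates of elements of $\ker(\Phi) \subseteq G_1$ a priori live only in $\overline{G_1}$ rather than $G_1$, so the verifications of closure and normality must be phrased in terms of the upper approximation $\overline{\ker(\Phi)}$ and its intersection with the relevant equivalence classes. Handling this cleanly — in particular making sure each element we produce has a representative in $G_1$ on which $\Phi$ still evaluates to $e_2$ — is the step that requires care.
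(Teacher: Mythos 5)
Your proposal follows the same route as the paper's proof: verify the two conditions of the rough-subgroup criterion (closure of products, closure under inverses) and then establish normality by the conjugation computation $\Phi(x n x^{-1}) = e_2$. In fact you are more careful than the paper in two respects: you first establish $\Phi(e_1)=e_2$ and $\Phi(x^{-1})=\Phi(x)^{-1}$ (which the paper uses without comment), and you explicitly confront the question of \emph{where} the products $xy$ and conjugates $xnx^{-1}$ live, whereas the paper simply writes $x*y\in\ker(\Phi)$ and $x*r*x^{-1}\in\ker(\Phi)$ even though these elements need only lie in $\overline{G_1}$ and the kernel is defined as a subset of $G_1$.

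The one place your argument does not hold up is precisely the step you flag as delicate. To get $xy\in\overline{\ker(\Phi)}$ you need $[xy]_{R_1}\cap\ker(\Phi)\neq\emptyset$, and your proposed justification --- that any representative of $[xy]_{R_1}$ lying in $G_1$ has $\Phi$-image $e_2$ --- is unsupported: $\Phi$ is merely a map on $\overline{G_1}$ respecting the operation, and nothing forces two elements of the same equivalence class to have the same image. So in the case $xy\in\overline{G_1}\setminus G_1$ your argument (and the analogous step for $xnx^{-1}$) is incomplete. To be fair, the paper's own proof silently ignores this issue altogether, so your write-up exposes a genuine weak point in the published argument rather than introducing a new one; but as it stands the membership claims $xy\in\overline{\ker(\Phi)}$ and $xnx^{-1}\in\overline{\ker(\Phi)}$ are not actually proved by either text in the general case.
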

\begin{proof}
For every $x,y \in ker(\Phi),$ we have $\Phi(x)=e_{2},$ and $\Phi(y)= e_{2}.$
\begin{enumerate}
\item Since $\Phi(x*y)=\Phi(x)*^{'}\Phi(y)=e_{2},$ we have $x*y \in ker(\Phi).$ 
\item We have $\Phi(x^{-1})=(\Phi(x))^{-1} = (e_{2})^{-1}.$ Hence $ker(\Phi)$ is a rough subgroup of $G_{1}.$
\item For every $x \in G_{1}$ and $r \in ker(\Phi),$ we have $\Phi(x*r*x^{-1})= \Phi(x)*^{'}  \Phi(r)*^{'} \Phi(x^{-1})= e_{2}.$ Therefore, $x*r*x^{-1} \in ker(\Phi).$ so that $ker(\Phi)$ is a rough normal subgroup of $G_{1}.$

\end{enumerate}
\end{proof}

\begin{remark}
The rough kernel is always a subset of upper approximation of $G_{1}$. Indeed, if $\overline{G_{1}}$  is a group then the kernel is a normal subgroup of $\overline{G_{1}}.$

\end{remark}

\begin{example}
Consider the map $\Phi: \overline{G_{U}} \rightarrow \overline{G_{S_{4}}}$, where $G_{U}$ and $G_{S_{4}}$ are rough groups in Example $\ref{exrg1}$ and Example $\ref{exrg2}$ respectivily. Define $f$ as follow:
$$\Phi (\overline{0})=(1), \Phi (\overline{1})=(1), \Phi(\overline{2})=(1).$$ Clearly $\Phi$ is continuous and homomorphism. Hence $\Phi$ is a topological rough group homomorphism. From Definition \ref{ker} it is easy to see that $ker(\Phi)=\{  \overline{1}, \overline{2}\}$ which is a subset of $\overline{G_{U}}.$ Moreover, $ker(\Phi)$ is a rough normal subgroup of $G_{U}$.
\end{example}

{\bf{Acknowledgement}} The authors wish to thank the Deanship for Scientific Research (DSR) at King Abdulaziz University for financially funding this project under grant no. KEP-PhD-2-130-39.

\vspace*{1cm}

\end{document}